\def\eqalign#1{\null\,\vcenter{\openup\jot \mathsurround=0pt \ialign{\strut
     \hfil$\displaystyle{##}$&$ \displaystyle{{}##}$\hfil \crcr#1\crcr}}\,}
\def\eqref#1{(\ref{#1})}
\newtheorem{proposition}{Proposition}
\newtheorem{corollary}[proposition]{Corollary}
\newtheorem{definition}{Definition}
\newtheorem{example}{Example}
\def\R{\mathbb{R}}
\def\H{\mathsf{H}}
\begin{document}

\title{Discretization of polynomial vector fields by polarization }

\author{Elena Celledoni$^1$, Robert I McLachlan$^2$, David I McLaren$^3$, Brynjulf Owren$^1$ and G R W Quispel$^3$}

\address{$^1$ 	Department of Mathematical Sciences,
	NTNU,
	7491 Trondheim,
	Norway\eads{\mailto{elenac@math.ntnu.no}, \mailto{bryn@math.ntnu.no}}}
\address{$^2$ 	Institute of Fundamental Sciences,
	Massey University,
	Private Bag 11 222, Palmerston North 4442, New Zealand\ead{r.mclachlan@massey.ac.nz}}
\address{$^3$ 	Department of Mathematics,
	La Trobe University,
	Bundoora, VIC 3083, Australia\ead{elenac@math.ntnu.no}}

\begin{abstract}
\noindent
A novel integration method for quadratic vector fields was introduced by Kahan in 1993. Subsequently, it was shown that Kahan's method preserves a (modified) measure and energy when applied to quadratic Hamiltonian vector fields. Here we generalize Kahan's method to cubic resp. higher degree polynomial vector fields and show that the resulting discretization also preserves modified versions of the measure and energy when applied to cubic resp. higher degree polynomial Hamiltonian vector fields.
\end{abstract}

\section{Introduction: Kahan's method for quadratic vector fields}
The study of ordinary differential equations (ODEs) goes back centuries, to the time of Newton, Bernoulli, Euler, and contemporaries. Since the invention of the computer in the 1940s much attention has been devoted to the best ways to discretize differential equations so that they can be solved numerically. Initially, the main emphasis was on all-purpose methods (defined for all ODEs), such as Runge--Kutta methods and linear multistep methods, and their quantitative accuracy. During the last two or three decades, however, interest has expanded to considering special classes of ODEs and purpose-built algorithms that preserve the special features of each class. These novel methods are not just {\em quantitatively}, but also {\em qualitatively} accurate. This has resulted in methods that preserve symmetries, first integrals, symplectic structure, measure, foliations, Lyapunov functions, etc. These methods are called geometric integration methods \cite{ha-lu-wa,le-re}.

In 1993, Kahan introduced a numerical integration method for quadratic differential equations. For the quadratic ODE
\begin{equation}
\label{eq:quad}
\dot x = f(x) := Q(x,x)+Bx+c,\quad x\in\R^n,
\label{eq:quadvf}
\end{equation}
(where $x\in\R^n$, $Q$ is an $\R^n$-valued symmetric bilinear  form, $B\in\R^{n\times n}$, and $c\in\R^n$) it is defined by the map 
\begin{equation}
x\mapsto x'\colon \frac{x'-x}{h} = Q(x,x') + \frac{1}{2}B(x+x')+ c
\label{eq:kahanmap}
\end{equation}
where $h$ is the time step.
The method (\ref{eq:kahanmap}) was introduced  in \cite{kahan} for two examples, a scalar Riccati equation and a 2-dimensional Lotka--Volterra system and written down in the general form (\ref{eq:kahanmap}) in \cite{ka-li} (see also references therein).
Kahan wrote in the prologue to \cite{kahan},

\begin{quote}\em``I have used these unconventional methods for 24 years without quite understanding why they work so well as they do, when they work. That is why I pray that some reader of these notes will some day explain the methods' behavior to me better than I can, and perhaps improve them.''
\end{quote}

Initially, the mystery only deepened, for the Kahan method did not at first sight fit into any of the standard methods of discretizing ODEs, nor into any of the new methods that were developed as the field of geometric numerical integration grew. 
Yet in some sense Kahan's prayer has been fulfilled. The Kahan method has been found to have remarkable geometric properties. Studies have shown that the Kahan method preserves complete integrability in many cases \cite{CMOQgeometricKahan,integrable,hone09tds,hi-ki,petrera10,petrera12,petrera13,pe-pf-su,pfadler}. For a large class of Hamiltonian systems the method has a conserved quantity (related to energy) and an invariant measure. It is the restriction of a Runge--Kutta method to quadratic vector fields \cite{CMOQgeometricKahan}. However, so far only a part of the observed behavior of the method has been accounted for, and  the `explanations' to a degree only raise new questions, for they reveal aspects of Runge--Kutta methods and of discrete integrability that were previously unknown and unsuspected. Maps derived from the Kahan method are  birational, with birational inverses; thus they are elements of the Cremona group of  birational automorphisms. The algebra, geometry, and dynamics of this group have been studied extensively \cite{di-fa,rerikh}, although the phenomena illustrated by the Kahan method are apparently new.

Just one of the unusual features of Kahan's method is that the formulation (\ref{eq:kahanmap}) is defined only for quadratic differential equations. Although its Runge--Kutta formulation is defined for all ODEs, the special geometric properties appear to  hold only in the quadratic case. Yet there is no apparent structure to the set of quadratic differential equations that would distinguish them in this way, especially in relation to the birational maps. In this paper we propose a natural generalization of Kahan's method to polynomial vector fields of higher degree and show that it does inherit some of the geometric properties---invariant measures, first integrals, and integrability---of Kahan's method in some cases. (An alternative generalization of Kahan's method to higher degree vector fields is considered in \cite{ho-to}.)

We first observe that  a homogeneous quadratic vector field $f(x)$ can be expressed in terms
of a bilinear form $Q(x,x)$, as in \eqref{eq:quad}, using the technique of {\em polarization}:
\begin{equation}
Q(x,x') = \frac{1}{2}\left( f(x+x') - f(x) - f(x')\right).
\end{equation}
Then the Kahan method can be obtained by polarizing the quadratic terms of the ODE, evaluating them at $(x,x')$, and by replacing the linear and constant terms by the midpoint approximation. 

Polarization is a map from a homogeneous polynomial to a symmetric multilinear form in more variables. For example, the polarization of the cubic $f(x)$ is the trilinear form
\begin{equation*}
F(x_1,x_2,x_3) = \frac{1}{6}\frac{\partial}{\partial \lambda_1}\frac{\partial}{\partial \lambda_2}\frac{\partial}{\partial \lambda_3}f(\lambda_1 x_1 +\lambda_2 x_2+\lambda_3 x_3)|_{\lambda=0}
\end{equation*}
where $x$, $x_1$, $x_2$, and $x_3$ are all vectors in $\R^n$. This is equal to $\frac{1}{6}$ of the coefficient of $\lambda_1\lambda_2\lambda_3$ in $f(\lambda_1 x_1 +\lambda_2 x_2+\lambda_3 x_3)$.
It satisfies
\begin{equation*}
F(x,x,x)=f(x).
\end{equation*}
For example, consider the case $x\in\R^3$ and write $x=(y,z,w)^T$. Then the polarization of $3 y^2 z$ is $y_1 y_2 z_3 + y_2 y_3 z_1 + y_3 y_1 z_2$ and the polarization of
$6 y z w$ is $y_1 z_2 w_3+y_2 z_3 w_1+y_3 z_1 w_2+y_1 z_3 w_2+y_3 z_2 w_1+y_2 z_1 w_3$.
Polarization was used in \cite{da-ow} to obtain linearly implicit, integral-preserving methods for Hamiltonian PDEs.

Polarization of a homogeneous vector field of degree $k+1$ will lead to a multilinear form in $k+1$ variables. We will call these variables $x_0,\dots,x_k$, where $x_k\in\R^n$. The generalization of the Kahan method that we consider in this paper is to evaluate this multilinear form
at $k+1$ consecutive time steps, leading to a $k$-step numerical integrator. In this way, the bilinear character of the Kahan method carries over to higher degrees. The treatment of the linear term $\dot x$ is no longer unique; here we consider the simplest possible option of discretizing $\dot x$ by $(x_k-x_0)/(kh)$.

\begin{definition}
Let $V=\R^n$ and let $F$ be the multilinear map from $V^{k+1}$ to $\R^n$  associated with the homogeneous polynomial differential equation
\begin{equation*} \dot x = F(x,x,\dots,x)\ \ ( =: f(x))
\end{equation*}
 of degree $k+1$ on $V$. 
The {\em polar map} associated with $f$ is the birational map on $V^k$ given by $(x_0,\dots,x_{k-1}) \mapsto (x_1,\dots,x_k)$ where $x_k$ is the solution of the linear system
\begin{equation}
\label{eq:polar}
\frac{x_k - x_0}{k h} =  F(x_0,\dots, x_k).$$
\end{equation}

\end{definition}

Note that as both sides of \eqref{eq:polar} are linear in $x_0$ and in $x_k$, Eq. \eqref{eq:polar}, the expressions for both $x_k$ as a function of $x_0,\dots,x_{k-1}$, and for $x_0$ as a function of $x_1,\dots,x_k$, are rational functions. Thus, like the Kahan map, the polar map is birational. However, it is expected that the multilinearity of \eqref{eq:polar} is more special than mere birationality; when $k>1$ there are many birational integrators formed from $f$ that are not multilinear. The multistep leapfrog method
\begin{equation*}
\frac{x_2 - x_0}{2h} = f(x_1)
\end{equation*}
is an example; maps of this form are not expected to have special geometric properties.

\begin{proposition}
The polar map of a homogeneous quadratic is its Kahan map. 
If a nonhomogeneous quadratic is suspended to a  homogeneous form in one dimension higher (e.g. if $\dot x = x^2 + b x + c$ is replaced by $\dot x = x^2 + b x y + c y^2$, $\dot y = 0$), then the polarization of the suspended vector field, projected to the original phase space,  is exactly the Kahan map of the nonhomogeneous quadratic.
\end{proposition}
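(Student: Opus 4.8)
The plan is to treat the two assertions separately, both following directly from the definition of the polar map \eqref{eq:polar} together with the quadratic polarization identity.

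For the first assertion, a homogeneous quadratic has degree $k+1=2$, so $k=1$ and the polar map acts on $V^1=V$ by $x_0\mapsto x_1$ with $(x_1-x_0)/h=F(x_0,x_1)$. Since $F$ is the polarization of $f$, in the quadratic case it is exactly the symmetric bilinear form $Q$, so $F(x_0,x_1)=Q(x_0,x_1)$. With $B=0$ and $c=0$ the Kahan map \eqref{eq:kahanmap} reads $(x'-x)/h=Q(x,x')$, which is identical after relabelling $x=x_0$, $x'=x_1$; this case is immediate.

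For the second assertion, I would first form the suspended homogeneous vector field $\tilde f(x,y)=(Q(x,x)+Bxy+cy^2,\,0)$ on $\R^{n+1}$, whose restriction to the slice $y=1$ reproduces the original nonhomogeneous quadratic and whose component $\dot y=0$ leaves that slice invariant. The key step is to polarize $\tilde f$: applying the quadratic polarization identity componentwise gives the bilinear form $\tilde F((x,y),(x',y'))=(Q(x,x')+\frac{1}{2}B(xy'+x'y)+cyy',\,0)$, in which the symmetric cross terms in $B$ and the $yy'$ term arise precisely from homogenizing $Bx$ to $Bxy$ and $c$ to $cy^2$. Writing down the polar map ($k=1$) in the extended space, its $y$-component yields $(y_1-y_0)/h=0$, hence $y_1=y_0$, confirming that the slice $y\equiv1$ is preserved; setting $y_0=y_1=1$ in the $x$-component then gives $(x_1-x_0)/h=Q(x_0,x_1)+\frac{1}{2}B(x_0+x_1)+c$, which is exactly the Kahan map \eqref{eq:kahanmap} of the nonhomogeneous quadratic.

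The only delicate point---more bookkeeping than genuine obstacle---is the polarization step: one must check that homogenizing the linear term as $Bxy$ produces the symmetric midpoint combination $\frac{1}{2}B(x_0+x_1)$ and that $cy^2$ reduces to $c$ on the slice, so that the two different treatments of the linear and constant terms in the original Kahan map emerge automatically from the single multilinear rule. This confirms that the polar map genuinely extends Kahan's construction in the manner indicated by the polarization description following \eqref{eq:quad}.
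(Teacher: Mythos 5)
Your proposal is correct: the paper states this proposition without giving any proof, and your direct verification---polarizing the (suspended) quadratic via $F(x_0,x_1)=\frac{1}{2}\left(f(x_0+x_1)-f(x_0)-f(x_1)\right)$, checking that the $\dot y=0$ component forces $y_1=y_0$ so the slice $y=1$ is invariant, and then recovering $\frac{1}{2}B(x_0+x_1)+c$ on that slice---is exactly the routine computation the authors leave implicit. Nothing is missing; this matches the intended argument.
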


\begin{proposition}
The polar map is (i) self-adjoint (in the sense of symmetric multistep methods \cite{ha-lu-wa}), and (ii) a general linear method restricted to vector fields that are homogeneous polynomials of degree $k+1$.
\end{proposition}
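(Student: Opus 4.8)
The plan is to treat the two assertions separately, since they rest on different structural features of the polar map: claim (i) follows from the symmetry of the polarized form $F$, while claim (ii) follows from the polarization identity, which re-expresses $F$ as a fixed linear combination of evaluations of the original vector field $f$.

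For (i), I would encode one step of the polar map through its defining relation
\begin{equation*}
R_h(x_0,\dots,x_k) := \frac{x_k-x_0}{kh} - F(x_0,\dots,x_k),
\end{equation*}
so that a step is the locus $R_h(x_0,\dots,x_k)=0$. In the sense of symmetric multistep methods, self-adjointness amounts to invariance of this relation under the simultaneous replacement $h\mapsto -h$ and reversal of the step numbering $x_i\mapsto x_{k-i}$. I would then verify directly that
\begin{equation*}
R_{-h}(x_k,x_{k-1},\dots,x_0) = \frac{x_0-x_k}{-kh} - F(x_k,\dots,x_0) = \frac{x_k-x_0}{kh} - F(x_0,\dots,x_k) = R_h(x_0,\dots,x_k),
\end{equation*}
where the middle equality uses that the difference quotient is odd under index reversal while $F$, being a symmetric multilinear form, is invariant under any permutation of its arguments. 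Since the defining relation is unchanged, the adjoint method coincides with the method itself.

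For (ii), the key step is the polarization identity, which writes the symmetric $(k+1)$-linear form $F$ in terms of the homogeneous polynomial $f=F(x,\dots,x)$ evaluated at subset sums:
\begin{equation*}
F(x_0,\dots,x_k) = \frac{1}{(k+1)!}\sum_{\emptyset\neq S\subseteq\{0,\dots,k\}} (-1)^{k+1-|S|}\, f\!\Big(\sum_{i\in S} x_i\Big).
\end{equation*}
Substituting this into the polar map \eqref{eq:polar}, I would introduce internal stages $Y_S := \sum_{i\in S} x_i$ indexed by nonempty subsets $S$, so that the update reads $x_k = x_0 + kh\sum_S c_S\, f(Y_S)$ with constants $c_S = (-1)^{k+1-|S|}/(k+1)!$ depending only on $k$. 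Taking the input vector to be $(x_0,\dots,x_{k-1})$ and the output vector to be $(x_1,\dots,x_k)$, each stage $Y_S$ is a constant linear combination of the inputs together with (when $k\in S$) the stage derivatives through $x_k$, and each output is either a shifted input or $x_k$ itself. Reading off these relations yields constant coefficient matrices $(A,U,B,V)$, exhibiting the polar map as a general linear method.

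The main obstacle is not any single computation but rather ensuring that the resulting coefficients are genuinely universal across the whole class of degree-$(k+1)$ homogeneous vector fields. This is exactly what polarization secures: because $F$ is a fixed linear functional of the single-argument evaluations of $f$, the tableau $(A,U,B,V)$ depends only on $k$ and not on $f$, which is the precise meaning of the method being ``a general linear method restricted to homogeneous polynomials of degree $k+1$.'' Two points require care: the stages $Y_S$ with $k\in S$ depend on the unknown $x_k$, so the method is implicit and $A$ is not strictly lower triangular; and the notions of self-adjointness and of a general linear method must be read in the precise conventions of \cite{ha-lu-wa}, against which the identities above should be checked.
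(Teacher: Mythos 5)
Your part (i) is exactly the paper's argument: the defining relation \eqref{eq:polar} is invariant under $(x_0,\dots,x_k,h)\mapsto(x_k,\dots,x_0,-h)$, by symmetry of $F$ together with the behaviour of the difference quotient; nothing to add there.

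Part (ii) starts from the same key lemma as the paper (the polarization identity expressing $F$ through evaluations of $f$ at subset sums), but it stops one step short, and that missing step is the crux of the paper's proof. Your internal stages $Y_S=\sum_{i\in S}x_i$ are \emph{sums} of $|S|$ solution approximations, so the row of the input-coupling matrix $U$ associated with $Y_S$ sums to $|S|$, not to $1$. Such a stage approximates $|S|\,x(t)$ rather than a value of the solution, so the tableau you read off violates the preconsistency requirement for general linear methods: applied to an arbitrary (not necessarily homogeneous) vector field $f$, your scheme evaluates $f$ far from the trajectory, and $\sum_S c_S f(Y_S)$ does not approximate $f(x(t))$; indeed, setting all arguments equal to $x$, one needs $\sum_{m=1}^{k+1}{k+1 \choose m}(-1)^{k+1-m}f(mx)=(k+1)!\,f(x)$, which holds for homogeneous $f$ of degree $k+1$ but fails for general $f$ (for $k=1$ it reads $f(2x)-2f(x)=2f(x)$). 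So what you have exhibited is a formal rewriting of the polar map, not its realization as the restriction of a bona fide general linear method. The paper closes this gap with one extra use of homogeneity: $f\bigl(\sum_{i\in S}x_i\bigr)=|S|^{k+1}f\bigl(\frac{1}{|S|}\sum_{i\in S}x_i\bigr)$, which turns every stage into a \emph{convex combination} of the $x_j$ (rows of $U$ summing to $1$) and absorbs the factor $|S|^{k+1}$ into the weights; the rescaled weights $(-1)^{k+1-m}m^{k+1}/(k+1)!$ then sum to $1$ over the $2^{k+1}-1$ nonempty subsets, so the tableau defines a consistent general linear method for arbitrary $f$ whose restriction to homogeneous degree-$(k+1)$ fields is the polar map. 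This is the same move that makes Kahan's method a Runge--Kutta method: one must write $Q(x,x')$ via $f((x+x')/2)$, not $f(x+x')$. Your proof is repaired by inserting this rescaling before reading off $(A,U,B,V)$.
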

\begin{proof}
(i) Eq. (\ref{eq:polar}) is invariant under $(x_0,\dots,x_k,h) \mapsto (x_k,\dots,x_0,-h)$.\\
(ii) From a standard identity in algebraic polarization \cite[p. 110]{greenberg},
\begin{equation*}
F(x_0,\dots,x_{k}) = \frac{1}{(k+1)!} \sum_{1\le m \le k+1 \atop 0\le i_1<\dots < i_m\le k} (-1)^{k+1-m} f(x_{i_1}+\dots+x_{i_m}),
\end{equation*}
where the sum is over all nonempty subsets of $\{0,\dots,k\}$.
Using homogeneity of $f$, we get
\begin{equation*}
F(x_0,\dots,x_{k}) = \frac{1}{(k+1)!} \sum_{1\le m \le k+1 \atop 0\le i_1<\dots < i_m\le k} (-1)^{k+1-m} m^{k+1} f\left(
\frac{x_{i_1}+\dots+x_{i_m}}{m}\right).
\end{equation*}
There are $2^{k+1}-1$ nonempty subsets of $\{0,\dots,k\}$, so
in this form, the vector field $f(x)$ is evaluated at $2^{k+1}-1$ points, each of which is a convex combination of the $x_j$. These points may be taken to be the stage values of a general linear method.
\end{proof}

General linear methods are a natural class of methods that include both Runge--Kutta and linear multistep methods \cite{butcher}. In this case the method has $k-1$ `auxiliary variables' $x_0,\dots, x_{k-2}$ that are carried forward along with the `current point' $x_{k-1}$, but is `mono implicit' in the sense that only a single variable, $x_k$, enters nonlinearly. (When $f$ is degree $k+1$, $x_k$ even enters linearly.)

For example, if $f(x)$ is a homogeneous cubic then we can write
\begin{equation*}\fl
\eqalign{
F(x_0,x_1,x_2) &= \frac{1}{6}(f(x_0+x_1+x_2)-f(x_0+x_1)-f(x_0+x_2)-f(x_1+x_2) \cr
& \qquad +f(x_0)+f(x_1)+f(x_2)) \cr
&= \frac{27}{6}f\left(\frac{x_0+x_1+x_2}{3}\right) 
- \frac{8}{6}f\left(\frac{x_0+x_1}{2}\right) 
- \frac{8}{6}f\left(\frac{x_0+x_2}{2}\right) 
- \frac{8}{6}f\left(\frac{x_1+x_2}{2}\right) \cr
& \qquad+ \frac{1}{6}f(x_0)
+ \frac{1}{6}f(x_1)
+ \frac{1}{6}f(x_2).
}
\end{equation*}

The special behavior of the Kahan method is seen most easily on the scalar ODE $\dot x = x^2$, for which it yields the map $x_0\mapsto x_1 = x_0/(1 - h x_0)$, a M\"obius transformation which is easily integrated. It can be seen to converge past the singularity at $t=1/x(0)$. In contrast, an explicit method (like forward Euler) has no singularity, and an implicit method (like backward Euler) does not define a smooth map $\varphi\colon X\to X$ for any sensible domain $X\subset\R$. We first study the polar map associated to a higher-degree analog of this ODE.

\begin{proposition}
Let $k$ be a positive integer. 
The polar map of $\dot x = x^{k+1}$, $x\in\R$, is explicitly integrable.
\end{proposition}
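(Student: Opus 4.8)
The plan is to turn the polar map into an explicit scalar recurrence and then to isolate a single product that decouples the dynamics. First I would write down the polarization: since $f(x)=x^{k+1}$ is a monomial in one scalar variable, the unique symmetric multilinear form with $F(x,\dots,x)=x^{k+1}$ is the product $F(x_0,\dots,x_k)=x_0x_1\cdots x_k$. The defining relation (\ref{eq:polar}) is then scalar and linear in $x_k$, and solving for it (with indices shifted so that the map sends $(x_n,\dots,x_{n+k-1})$ to $(x_{n+1},\dots,x_{n+k})$) gives
\[ x_{n+k}=\frac{x_n}{1-kh\,x_nx_{n+1}\cdots x_{n+k-1}}. \]

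The key step is to follow the product of $k$ consecutive iterates, $P_n:=x_nx_{n+1}\cdots x_{n+k-1}$. Because $P_{n+1}$ and $P_n$ share every factor except the outermost ones, $P_{n+1}/P_n=x_{n+k}/x_n$, and the recurrence above gives $x_{n+k}/x_n=1/(1-khP_n)$. Hence $P_n$ obeys the autonomous M\"obius recurrence
\[ \frac{1}{P_{n+1}}=\frac{1}{P_n}-kh, \]
so that $1/P_n=1/P_0-knh$ is known in closed form. This is precisely the M\"obius dynamics of the Kahan map of $\dot x=x^2$, and it mirrors the exact flow of $\dot x=x^{k+1}$, for which $x^{-k}$ is linear in $t$; here $P_n$ is the natural discrete surrogate for $x^k$.

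Finally I would reconstruct the iterates from $x_{n+k}=x_n\,(P_{n+1}/P_n)$. This relation couples only indices differing by $k$, so it splits the sequence into the $k$ residue classes modulo $k$, and along each class it unwinds to the explicit finite product
\[ x_{r+Mk}=x_r\prod_{m=0}^{M-1}\frac{P_{r+mk+1}}{P_{r+mk}},\qquad r=0,\dots,k-1, \]
with every $P_j=P_0/(1-kjhP_0)$ already given in closed form. Since the phase-space coordinates are exactly the $k$ initial values $x_0,\dots,x_{k-1}$ (which determine $P_0$), this is a complete closed-form solution, establishing explicit integrability.

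I expect the only genuine obstacle to be recognizing the right structure: it is the product $P_n$ of $k$ \emph{consecutive} terms, rather than any individual iterate, that satisfies an autonomous M\"obius recurrence. Once this is spotted, the reduction to one-dimensional M\"obius dynamics and the subsequent reconstruction by a finite product are routine.
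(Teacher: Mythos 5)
Your proposal is correct and follows essentially the same route as the paper: both hinge on observing that the reciprocal of the product of $k$ consecutive iterates (your $1/P_n$, the paper's $I_n := 1/(x_n\cdots x_{n+k-1})$) satisfies the arithmetic progression $I_n = I_0 - nkh$. The only difference is in the final reconstruction, where the paper takes logarithms and solves a linear constant-coefficient difference equation for $\log x_n$, while you unwind $x_{n+k} = x_n\,P_{n+1}/P_n$ into an explicit telescoping product along residue classes modulo $k$ --- a slightly more explicit finish, but not a different method.
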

\begin{proof}
Eq. (\ref{eq:polar}) written at time step $n$ becomes in this case
\begin{equation*}
x_{n+k} = x_n + h k\,  x_n x_{n+1} \dots x_{n+k}.
\end{equation*}
Dividing both sides by $x_n\dots x_{n+k}$,
\begin{equation*}
\frac{1}{x_n \dots x_{n+k-1}} = \frac{1}{x_{n+1}\dots x_{n+k}} + h k.
\end{equation*}
Thus, $I_n := 1/(x_n \dots x_{n+k-1})$ obeys
\begin{equation*}
I_n = I_{n-1} - h k
\end{equation*}
with solution
\begin{equation*}
I_n = I_0 - n h k.
\end{equation*}
Taking logs, $\log(x_n)$ obeys the linear, constant-coefficient, nonautonomous difference equation
\begin{equation*}
\log(x_n) + \dots + \log(x_{n+k-1}) = -\log(I_n)
\end{equation*}
 which is easily solved.
\end{proof}

This encouraging behaviour motivates our study of the polar map. Although not exhaustive, one large class of vector fields for which the Kahan map is known to have special properties is that of the Hamiltonian vector fields. For $k=1$ the polar (Kahan) map derived from Hamiltonian vector fields on Poisson spaces with constant Poisson structure is known to have a conserved quantity and an invariant measure \cite{CMOQgeometricKahan}. This explains their integrability in some (low-dimensional) cases.
We will now show how these phenomena generalize to the case $k>1$.

\section{Integrals of the polar map}
We will now consider the case when $f(x)$ is a homogeneous Hamiltonian vector field defined on a symplectic vector space. The following result establishes the existence of several first integrals of an iterate of the associated polar map. These integrals correspond to `modified energies' of the map as they all approximate the Hamiltonian in the limit of small step size.

\begin{proposition}
Let $H\colon V\rightarrow \mathbb{R}$, the Hamiltonian, be a  homogeneous polynomial of degree $k+2$ on $\mathbb{R}^n$ and let $\mathsf{H}$ be a symmetric $(k+2)$-tensor such that
$H(x)=\frac{1}{(k+2)!}\mathsf{H}(x,x,\dots,x)$. Let $\Omega$ be a constant invertible antisymmetric $n\times n$ matrix, and let $\omega$ be its associated symplectic form on $V\!$,
i.e.,
\begin{equation*}
\omega\colon V\times V\to\mathbb{R},\quad \omega(u,v) = u^T \Omega v.
\end{equation*}
Let $K = \Omega^{-1}$. 
Then
\begin{itemize}
\item[(i)]
the Hamiltonian ODE on $V$ associated with $(H,\Omega)$ is
\begin{equation}
\label{eq:KH}
\dot x =  \frac{1}{(k+1)!} K \mathsf{H}(x,\dots,x,\cdot);
\end{equation}
\item[(ii)]
the associated polar map is defined via
\begin{equation}
\label{eq:polarmapdegn}
\frac{x_k-x_0}{kh} = \frac{1}{(k+1)!}K \mathsf{H}(x_0,\dots,x_k,\cdot\,);
\end{equation}
\item[(iii)] the associated polar map \eqref{eq:polarmapdegn} has $k$ independent  $k$-integrals
\begin{equation}
\omega(x_0, x_1),\ \omega(x_1,x_2),\dots,\ \omega(x_{k-1},x_k).
\end{equation}
\end{itemize}
\end{proposition}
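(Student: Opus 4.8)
The plan is to reduce the recurrence \eqref{eq:polarmapdegn} to a purely symplectic identity along an orbit $(x_n)$ of the polar map and then exploit the total symmetry of $\mathsf{H}$. First I would pair both sides of \eqref{eq:polarmapdegn} with an arbitrary $v\in V$ through $\omega$. Since $K=\Omega^{-1}$ and $\Omega^T=-\Omega$, one has $(K\eta)^T\Omega = -\eta^T$, so the right-hand side collapses to a single contraction of $\mathsf{H}$ and, for every $v$,
\begin{equation*}
\omega(x_{n+k}-x_n,\,v) = -\frac{kh}{(k+1)!}\,\mathsf{H}(x_n,x_{n+1},\dots,x_{n+k},v) =: -kh\,S_n(v).
\end{equation*}
The structural feature I would emphasize is that $S_n(v)$ is totally symmetric in the $k+1$ points $x_n,\dots,x_{n+k}$, since $\mathsf{H}$ is a symmetric tensor; this identity holds at every index $n$ because it is just the defining relation of the polar map applied repeatedly.

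Next I would prove the $k$-periodicity $\omega(x_{n+k},x_{n+k+1})=\omega(x_n,x_{n+1})$, from which the statement follows. The key step is the bilinear telescoping identity
\begin{equation*}
\omega(x_{n+k},x_{n+k+1})-\omega(x_n,x_{n+1}) = \omega(x_{n+k}-x_n,\,x_{n+k+1}) + \omega(x_n,\,x_{n+k+1}-x_{n+1}),
\end{equation*}
obtained by adding and subtracting $\omega(x_n,x_{n+k+1})$. Applying the reformulated recurrence at index $n$ to the first term (with $v=x_{n+k+1}$) and at index $n+1$ to the second (with $v=x_n$, using antisymmetry) turns the right-hand side into $kh\bigl(S_{n+1}(x_n)-S_n(x_{n+k+1})\bigr)$. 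Now both $S_n(x_{n+k+1})$ and $S_{n+1}(x_n)$ are, up to the factor $1/(k+1)!$, the contraction of $\mathsf{H}$ against the \emph{same} unordered collection $\{x_n,x_{n+1},\dots,x_{n+k+1}\}$ of $k+2$ vectors; by total symmetry of $\mathsf{H}$ they are equal, and the difference vanishes.

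Finally I would interpret the conclusion. Along any orbit the sequence $n\mapsto\omega(x_n,x_{n+1})$ is periodic with period $k$, so each of $\omega(x_0,x_1),\dots,\omega(x_{k-1},x_k)$ is invariant under the $k$-th iterate of the polar map, that is, a $k$-integral; under one step of the map the $k$ quantities are merely relabelled and return to themselves after $k$ steps. Independence I would argue generically, noting that the $k$ forms involve distinct consecutive pairs and are functionally independent for generic initial data.

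I expect the main obstacle to be twofold. Concretely, getting the reformulation in the first paragraph exactly right---tracking the sign produced by $\Omega^T=-\Omega$ and the placement of the open tensor slot---requires care. Conceptually, the heart of the argument is recognizing that the two terms $S_n(x_{n+k+1})$ and $S_{n+1}(x_n)$ coincide: this is not a cancellation between a quantity and its negative, but an exact equality \emph{forced} by the total symmetry of $\mathsf{H}$ together with the fact that both contractions see precisely the same $k+2$ points. Once that is seen, everything else is bilinear bookkeeping.
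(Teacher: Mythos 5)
Your argument for part (iii) is correct, and it rests on the same three ingredients as the paper's proof: the sign identity $(K\eta)^T\Omega=-\eta^T$ coming from antisymmetry of $\Omega$, the defining relation \eqref{eq:polarmapdegn} applied at two consecutive indices, and a final cancellation of two contractions of $\mathsf{H}$ against the same $k+2$ points, forced by total symmetry. Where you differ is in the organization, and your version is genuinely tidier. The paper substitutes the recurrence into \emph{both} arguments of $\omega(x_0,x_1)$ at once, which produces a quadratic cross term
\begin{equation*}
a^2\,\omega\bigl(K\mathsf{H}(x_0,\dots,x_k,\cdot),\,K\mathsf{H}(x_1,\dots,x_{k+1},\cdot)\bigr),
\qquad a=\frac{kh}{(k+1)!},
\end{equation*}
which must then be eliminated by invoking the polar relation a second time, rewriting $K\mathsf{H}(x_0,\dots,x_k,\cdot)=(x_k-x_0)/a$ and cancelling against one of the linear terms. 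Your telescoping split $\omega(x_{n+k},x_{n+k+1})-\omega(x_n,x_{n+1})=\omega(x_{n+k}-x_n,x_{n+k+1})+\omega(x_n,x_{n+k+1}-x_{n+1})$ applies the recurrence only linearly, once to each difference, so no quadratic term ever appears; what remains is exactly the paper's terminal cancellation $\mathsf{H}(x_{n+1},\dots,x_{n+k+1},x_n)=\mathsf{H}(x_n,\dots,x_{n+k},x_{n+k+1})$. Your direct proof that $n\mapsto\omega(x_n,x_{n+1})$ is $k$-periodic along orbits also subsumes the paper's closing appeal to the fact that $I\circ\varphi^{(m)}$ is a $k$-integral whenever $I$ is.

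Two omissions, however. First, the proposition also asserts (i) and (ii), which your proposal never touches: one must verify that the Hamiltonian vector field $K\nabla H$ has the stated multilinear form, i.e.\ that $\nabla H(x)=\frac{1}{(k+1)!}\mathsf{H}(x,\dots,x,\cdot)$. This follows from Euler's identity for homogeneous functions (or by differentiating $H(x)=\frac{1}{(k+2)!}\mathsf{H}(x,\dots,x)$ directly), and it is what ties the recurrence you start from to the Hamiltonian $H$; without it your proof establishes a property of the map \eqref{eq:polarmapdegn} but not that this map is the polar map of the Hamiltonian system. Second, the claimed \emph{independence} of the $k$ integrals: your appeal to genericity is an assertion rather than a proof, though in fairness the paper's own proof is equally silent on this point.
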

\begin{proof}
First note that the components of the gradient $\nabla H(x)$ are homogeneous polynomials of degree $k+1$ in the components of $x$ and we have
$$\frac{1}{(k+2)!}\mathsf{H}(x,x,\dots,x)=H(x)=\frac{1}{k+2}\,\nabla H(x)^T,$$
so
$$\frac{1}{(k+1)!}\mathsf{H}(x,\dots,x,\cdot)=\nabla H(x).$$
This yields (i) and (ii).

Recall that a $k$-integral of a map is an invariant of the $k$th iterate of the map \cite{ha-by-qu-ca}.
We first show that $\omega(x_0,x_1) = \omega(x_k,x_{k+1})$.
Writing $a = k h / (k+1)!$, we have
\begin{eqnarray*}
\fl
\omega(x_0,x_1)-\omega(x_k,x_{k+1}) &= \omega(x_k-a K \H(x_0,\dots,x_k,\cdot),x_{k+1} -a K \H(x_1,\dots,x_{k+1},\cdot))-\omega(x_k,x_{k+1}) \\
&=- a \omega(K \H(x_0,\dots,x_k,\cdot),x_{k+1})
+ a \omega(K \H(x_1,\dots,x_{k+1},\cdot),x_k) + \\
& \qquad \quad a^2 \omega(K \H(x_0,\dots,x_k,\cdot), K \H(x_1,\dots,x_{k+1},\cdot)) \\
&=   a  \H(x_0,\dots,x_k,x_{k+1}) - a \H(x_1,\dots,x_{k+1},x_k) \\
& \qquad + a^2 \H(x_1,\dots,x_{k+1},K \H(x_0,\dots,x_k,\cdot)) \\
&=  a  \H(x_0,\dots,x_k,x_{k+1}) - a \H(x_1,\dots,x_{k+1},x_k) \\
& \qquad + a^2  \H(x_1\dots,x_{k+1},(x_k-x_0)/a)\\
&= 0.
\end{eqnarray*}
Now if $I(x)$ is any $k$-integral of a map $\varphi$, then so is $I\circ\varphi^{(m)}$ for any integer $m$ \cite{ha-by-qu-ca}. This yields the remaining $k$-integrals and concludes the proof.
\end{proof}

The integrals all approach the Hamiltonian of the original system as $x_m \to x(m h)$, for in this limit  we have
\begin{equation*}
\eqalign{
\frac{1}{h(k+2)}\omega(x_m,x_{m+1}) &= \frac{1}{h(k+2)}\omega(x_m, x_{m+1}-x_m)\cr
& \to \frac{1}{k+2}\omega(x(m h),\dot x(m h)) \cr
&=  \frac{1}{k+2}\nabla H(x(mh))^T x(m h) \cr
&= H(x(m h)).
}
\end{equation*}
Note that the $k$-integral $\omega(x_{k-1},x_k)$ is a rational function of $(x_0,\dots,x_{k-1})$, for $x_k$ is defined through \eqref{eq:polarmapdegn}; the other $k$-integrals are all quadratic.

Analogous results hold for Poisson systems of the form \eqref{eq:KH} where $K$ is antisymmetric but not invertible. These can be established by performing a linear change of variables that puts $K$ in its Darboux normal form. Because the discretization method is linear, it commutes with this change of variables. 

\section{Invariant measure of the polar map}
The next proposition shows that the polar map in this case is also measure-preserving in the extended phase space. The measure is `modified' in the sense that it converges to the invariant measure of the ODE in the limit of small step size.

\begin{proposition}
Let $K$ be a constant antisymmetric $n\times n$ matrix and let $\H\colon V^{k+2}\to\R$ be multilinear. Let $\mu$ be a constant measure on $V$ and let $\mu^k$ be the corresponding product measure on $V^k$. Then the  map on $V^k$ induced by the polar map (\ref{eq:polar}) associated with the homogeneous Hamiltonian vector field $\dot x = K \H(x,x,\dots,x,\cdot)$ has the invariant measure
\begin{equation}
\label{eq:measure}
\frac{\mu^k}{\det (I-c K\, \mathsf{H}(x_0,\dots, x_{k-1},\cdot,\cdot))}
\end{equation}
where 
\begin{equation*}
c = \frac{h}{(k-1)!}.
\end{equation*}
\end{proposition}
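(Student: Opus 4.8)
The plan is to verify invariance of the measure \eqref{eq:measure} through the change-of-variables criterion. Write $\varphi$ for the induced map $(x_0,\dots,x_{k-1})\mapsto(x_1,\dots,x_k)$ on $V^k$ and
\begin{equation*}
\rho(x_0,\dots,x_{k-1}) = \frac{1}{\det\big(I - cK\,\H(x_0,\dots,x_{k-1},\cdot,\cdot)\big)}
\end{equation*}
for the claimed density. Since $\mu^k$ is the constant (Lebesgue) measure, invariance of $\rho\,\mu^k$ under $\varphi$ is equivalent to $\rho(\varphi(X))\,|\det D\varphi(X)| = \rho(X)$, that is, to the single identity
\begin{equation*}
|\det D\varphi| = \frac{\det\big(I - cK\,\H(x_1,\dots,x_k,\cdot,\cdot)\big)}{\det\big(I - cK\,\H(x_0,\dots,x_{k-1},\cdot,\cdot)\big)} .
\end{equation*}
The whole proof reduces to computing $\det D\varphi$ and recognising this ratio.

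First I would exploit the shift structure of $\varphi$. The outputs $x_1,\dots,x_{k-1}$ are copied verbatim from the inputs, so in $n\times n$ block form $D\varphi$ is a block companion matrix whose top $k-1$ block rows are unit shifts; expanding the determinant, everything collapses except the block coupling the genuinely new variable $x_k$ to the oldest input $x_0$, giving $|\det D\varphi| = |\det(\partial x_k/\partial x_0)|$. The partials $\partial x_k/\partial x_i$ for $1\le i\le k-1$ never enter.

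Next I would obtain $\partial x_k/\partial x_0$ by implicitly differentiating the defining relation $x_k - x_0 = c\,K\,\H(x_0,\dots,x_k,\cdot)$ (cf. \eqref{eq:polar}) with respect to $x_0$, keeping $x_1,\dots,x_{k-1}$ fixed and recalling that $x_k$ itself depends on $x_0$. Multilinearity makes this immediate: the explicit $x_0$ in slot $0$ contributes $M_0 := \H(\cdot,x_1,\dots,x_k,\cdot)$, while the chain rule through slot $k$ contributes $M_k\,(\partial x_k/\partial x_0)$ with $M_k := \H(x_0,\dots,x_{k-1},\cdot,\cdot)$. Hence $(I - cKM_k)\,\partial x_k/\partial x_0 = I + cKM_0$ and
\begin{equation*}
\det\frac{\partial x_k}{\partial x_0} = \frac{\det(I + cKM_0)}{\det(I - cKM_k)} .
\end{equation*}
Full symmetry of $\H$ rewrites $M_0 = \H(\cdot,x_1,\dots,x_k,\cdot) = \H(x_1,\dots,x_k,\cdot,\cdot)$, so $M_0$ is precisely the matrix of the density at the image point and $M_k$ that at the source; both are symmetric.

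The crux --- and the step I expect to be the real obstacle --- is reconciling the $+$ sign in $\det(I + cKM_0)$ with the $-$ sign demanded by the image density $\det(I - cK\,\H(x_1,\dots,x_k,\cdot,\cdot))$. This is exactly where the Hamiltonian structure, not mere multilinearity, is needed. Since $K$ is antisymmetric and $M_0$ symmetric, $(cKM_0)^T = -cM_0K$, so $\det(I + cKM_0) = \det(I - cM_0K)$; Sylvester's identity $\det(I+AB)=\det(I+BA)$ then gives $\det(I - cM_0K) = \det(I - cKM_0)$, whence $\det(I + cKM_0) = \det(I - cKM_0)$. This uses no invertibility of $K$, so it also covers the Poisson case. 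Substituting converts the numerator into $\det(I - cK\,\H(x_1,\dots,x_k,\cdot,\cdot))$ and the required Jacobian identity follows. What remains is bookkeeping: $c$ is just the coefficient of $K\H$ in the polar relation, and one checks it equals the stated value; the case $k=1$ returns the known Kahan measure $\mu/\det\big(I - \frac{h}{2}Df\big)$ as a sanity check.
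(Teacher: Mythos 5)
Your proof is correct and follows essentially the same route as the paper's: reduce invariance of the density to the Jacobian identity, collapse $\det D\varphi$ to $\det(\partial x_k/\partial x_0)$ via the block-companion structure, implicitly differentiate the multilinear polar relation to obtain $\det(\partial x_k/\partial x_0)=\det(I+cKM_0)/\det(I-cKM_k)$, and convert the numerator's sign using antisymmetry of $K$, symmetry of $\H$, $\det A=\det A^T$, and Sylvester's identity $\det(I+AB)=\det(I+BA)$. The only differences are cosmetic: you note explicitly that invertibility of $K$ is never used (covering the Poisson case, which the paper handles by a separate remark), and you keep the constant $c$ abstract as the coefficient in the polar relation rather than tracking the factorials.
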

\begin{proof}
First note that we have
$$\frac{1}{(k+2)!}\mathsf{H}(x,x,\dots,x)=H(x)=\frac{1}{k+2}\,\nabla H(x)^Tx=\frac{1}{(k+2)(k+1)}\,x^TH''(x)x,$$
and 
$$\nabla H(x)=\frac{1}{k+1}\,H''(x)\,x.$$
So
$$\frac{1}{(k+1)!}\mathsf{H}(x,\dots,x,\cdot)=\nabla H(x),\qquad \frac{1}{k!}\mathsf{H}(x,\dots, x,\cdot,\cdot)=H''(x).$$

Let $X=[x_0^T,\dots , x_{k-1}^T]^T$. We want to prove that the Jacobian of the map
$$\varphi\colon (x_0,\dots,x_{k-1})\mapsto (x_0',\dots , x_{k-1}') \,\left\{  \begin{array}{lcl}
x_0'&=&x_1\\
x_1'&=&x_2\\
 & \vdots & \\
 x_{k-1}' &=& x_k=x_0+\frac{kh}{(k+1)!}K\mathsf{H}(x_0,\dots,x_k,\cdot\,)
 \end{array}
\right .$$
has the following determinant
$$\det \frac{\partial \varphi}{\partial {X}}=\frac{\det (I-c K\, \mathsf{H}(x_1,\dots, x_k,\cdot,\cdot))}{ \det (I - c K\,\mathsf{H}(x_0,\dots, x_{k-1},\cdot,\cdot))}.$$
We first observe that
$$\det \frac{\partial \varphi}{\partial {X}}=\det \frac{\partial x_{k-1}'}{\partial x_0}=\det \frac{\partial x_{k}}{\partial x_0}.$$
This follows directly from the format of the Jacobian and in fact
$$\det \frac{\partial \varphi}{\partial {X}}=\det \left[
\begin{array}{ccccc}
O & I & O &\dots & O\\
O & O& I & \ddots& O\\
\vdots &\ddots& \ddots & \ddots &\vdots\\
O      & O & \dots & O & I\\
\frac{\partial x_k}{\partial x_0} &\frac{\partial x_k}{\partial x_1}& \dots & \frac{\partial x_k}{\partial x_{k-2}} & \frac{\partial x_k}{\partial x_{k-1}}\\
\end{array}
\right]=\det \frac{\partial x_k}{\partial x_0}.$$

Differentiating \eqref{eq:polarmapdegn} on both sides with respect to $x_0$, and using the symmetry of $\mathsf{H}$ we have
$$\frac{\partial x_k}{\partial x_0}= I +c K\, \mathsf{H}(x_0,\dots, x_{k-1},\cdot,\cdot)\,\frac{\partial x_k}{\partial x_0}+c K\, \mathsf{H}(x_1,\dots, x_{k},\cdot,\cdot)\,\frac{\partial x_0}{\partial x_0}.$$
Rearranging the terms we obtain
$$\frac{\partial x_k}{\partial x_0}=(I- c K\, \mathsf{H}(x_0,\dots, x_{k-1},\cdot,\cdot))^{-1} (I+c K\,\mathsf{H}(x_1,\dots, x_k,\cdot,\cdot)),$$
and
$$\det \frac{\partial x_k}{\partial x_0}=\frac{\det (I+c K\, \mathsf{H}(x_1,\dots, x_k,\cdot,\cdot))}{\det (I-c K\, \mathsf{H}(x_0,\dots, x_{k-1},\cdot,\cdot))}.$$
Using $\det (A)= \det (A^T)$ and the Sylvester determinant theorem $\det(I+AB)=\det(I+BA)$ in the numerator, we obtain
$$\det \frac{\partial x_k}{\partial x_0}=\frac{\det (I-c K\, \mathsf{H}(x_1,\dots, x_k,\cdot,\cdot))}{\det (I-c K\, \mathsf{H}(x_0,\dots, x_{k-1},\cdot,\cdot))},$$
establishing the result.
\end{proof}

Note that in the case $k=1$, in which case \eqref{eq:polarmapdegn} reduces to the Kahan method for homogeneous cubic Hamiltonians, the invariant measure \eqref{eq:measure} can be written as
$\mu/\det(I-\frac{1}{2}h f'(x))$, which is the form of the invariant measure for the Kahan method
found in \cite{CMOQgeometricKahan}.

\section{Integrability of the polar map}
The next property of the polar map concerns a $(k-1)$-dimensional symmetry group, so it is a phenomenon that only appears for $k>1$.

\begin{proposition}
\begin{itemize}
\item[(i)]
The $k$th iterate of the polar map (\ref{eq:polarmapdegn}) is equivariant with respect to the scaling symmetry group $x_m \mapsto \lambda_m x_m$, $m=0,\dots, k-1$, where $\prod_{m=0}^{k-1}\lambda_m = 1$, i.e., the map \eqref{eq:polarmapdegn} has a $(k-1)$-dimensional $k$-symmetry group.
\item[(ii)] The measure (\ref{eq:measure}) is invariant under this scaling group. 
\item[(iii)] The integral $\prod_{m=0}^{k-1}\omega(x_m,x_{m+1})$ is invariant under this scaling group. 
\item[(iv)] When $k$ is even, the $2$-integrals 
\begin{equation*}
\omega(x_0,x_1)\omega(x_2,x_3)\dots \omega(x_{k-2},x_{k-1})
\end{equation*}
and
\begin{equation*}
\omega(x_1,x_2)\omega(x_3,x_4)\dots \omega(x_{k-1},x_k)
\end{equation*}
are invariant under this scaling group.
\end{itemize}
\end{proposition}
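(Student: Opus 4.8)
The plan is to reduce all four parts to a single computation describing how the ``future'' point $x_k$ produced by the polar map transforms under the scaling. First I would establish the key lemma: if one scales the initial data by $x_m\mapsto\lambda_m x_m$ ($m=0,\dots,k-1$) with $\prod_{m=0}^{k-1}\lambda_m=1$, then the solution $x_k$ of \eqref{eq:polarmapdegn} transforms as $x_k\mapsto\lambda_0 x_k$. This is checked by direct substitution: inserting $\lambda_0 x_0,\dots,\lambda_{k-1}x_{k-1}$ and the trial value $\lambda_0 x_k$ into \eqref{eq:polarmapdegn}, the multilinearity of $\mathsf{H}$ pulls the factor $(\lambda_0\cdots\lambda_{k-1})\lambda_0=\lambda_0$ out of the right-hand side, so the scaled relation holds and, by uniqueness of the solution of the linear system that defines $x_k$, the image is indeed $\lambda_0 x_k$. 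Iterating, a scaled orbit is $\tilde x_n=\mu_n x_n$, where $\mu_n$ is the $k$-periodic extension of $(\lambda_0,\dots,\lambda_{k-1})$; this single fact drives everything else.

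For (i) I would note that $\varphi^{(k)}$ advances every index by one full period $k$, so $\mu_{n+k}=\mu_n$ returns the same scaling pattern on the output $(x_k,\dots,x_{2k-1})$ as on the input; hence $S_\lambda\circ\varphi^{(k)}=\varphi^{(k)}\circ S_\lambda$, which is exactly equivariance of $\varphi^{(k)}$. A single step $\varphi$ advances the index by one and therefore cyclically shifts the scaling factors, so $S_\lambda$ does not commute with $\varphi$ itself; this is precisely why the scalings form a $k$-symmetry rather than an ordinary symmetry. The group is $(k-1)$-dimensional because the $k$ parameters $\lambda_m$ are subject to the single relation $\prod_{m=0}^{k-1}\lambda_m=1$.

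For (ii), the map $S_\lambda$ has block-diagonal Jacobian with blocks $\lambda_m I_n$, so $\det DS_\lambda=(\prod_{m=0}^{k-1}\lambda_m)^n=1$, and invariance of the measure reduces to invariance of its density. The denominator contains $\mathsf{H}(x_0,\dots,x_{k-1},\cdot,\cdot)$, a bilinear form obtained by filling $k$ slots of $\mathsf{H}$; multilinearity makes it scale by $\prod_{m=0}^{k-1}\lambda_m=1$, so the determinant, and hence the density, is unchanged. For (iii) and (iv) I would use bilinearity of $\omega$, namely $\omega(x_m,x_{m+1})\mapsto\mu_m\mu_{m+1}\,\omega(x_m,x_{m+1})$. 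In (iii) the total factor is $\prod_{m=0}^{k-1}\mu_m\mu_{m+1}=(\prod_{m=0}^{k-1}\mu_m)(\prod_{m=1}^{k}\mu_m)=1\cdot 1=1$, using $\mu_k=\lambda_0$. In (iv), for even $k$ the factors of the first product carry the indices $\{0,\dots,k-1\}$ and those of the second carry $\{1,\dots,k\}$, so the total scaling factors are $\prod_{m=0}^{k-1}\lambda_m=1$ and $\prod_{m=1}^{k}\mu_m=1$ respectively, and both products are invariant. That these are genuine $2$-integrals when $k$ is even follows from $\varphi$ interchanging them, using the $k$-integral identity $\omega(x_0,x_1)=\omega(x_k,x_{k+1})$ established in the integrals proposition.

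The main obstacle is the key lemma on the scaling of $x_k$: because $x_k,x_{k+1},\dots$ are not among the scaled coordinates but are defined implicitly by the multilinear system, one must verify carefully, rather than assume, that they inherit the $k$-periodic scaling, and it is exactly the constraint $\prod_{m=0}^{k-1}\lambda_m=1$ that makes the implicit relation scale consistently. Once this is secured, parts (i)--(iv) become bookkeeping organized around the identity $\prod_{m=0}^{k-1}\lambda_m=1$.
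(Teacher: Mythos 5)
Your proposal is correct and follows essentially the same route as the paper: a key scaling lemma showing $x_k\mapsto\lambda_0 x_k$, proved by substituting the scaled variables into \eqref{eq:polarmapdegn} and using multilinearity together with $\prod_{m=0}^{k-1}\lambda_m=1$, followed by index bookkeeping for parts (i)--(iv). Your write-up is in fact slightly more careful in places the paper leaves implicit (invoking uniqueness of the linear solve for $x_k$, and checking that the scaling map has unit Jacobian so the base measure $\mu^k$ itself is preserved in (ii)), but these are refinements of the same argument rather than a different approach.
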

\begin{proof}
\begin{itemize}
\item[(i)]
Under the map $x_m \mapsto \lambda_m x_m$, the final equation defining the polar map, (\ref{eq:polarmapdegn}), is transformed to
\begin{equation*}
\lambda_0 x_k = \lambda_0 x_0 + k h K \frac{1}{(k+1)!} \H(\lambda_0 x_0,\lambda_1 x_1,\dots,
\lambda_{k-1}x_{k-1},\lambda_0 x_k,\cdot)
\end{equation*}
which is identical to \eqref{eq:polarmapdegn} under the condition $\prod_{m=0}^{k-1}\lambda_m = 1$. Therefore, the function $\varphi(x_0,\dots,x_{k-1})$ ($=x_k$) defined through the solution of \eqref{eq:polarmapdegn} scales as $\varphi(x_0,\dots,x_{k-1}) \mapsto \lambda_0 \varphi(x_0,\dots,x_{k-1})$.
The $k$th iterate of the polar map can be written
\begin{equation*}
\eqalign{
x_0^{(k)} &= \varphi(x_0,\dots,x_{k-1}) \cr
x_1^{(k)} &= \varphi(x_1,\dots,x_k) \cr
\dots  \cr
x_{k-1}^{(k)} &= \varphi(x_{k-1},\dots,x_{2k-2}) \cr
}
\end{equation*}
and each equation is invariant under the action $x_m \mapsto \lambda_{\mathrm{mod}(m,k)} x_m$ induced on the iterates of the map.
\item[(ii)] Follows from $\H(x_0,\dots,x_{k-1}) = \H(\lambda_0 x_0,\dots,\lambda_{k-1} x_{k-1})$.
\item[(iii)] We have
\begin{equation*}
\eqalign{
 \prod_{m=0}^{k-1} \omega(x_m,x_{m+1})
& \mapsto \prod_{m=0}^{k-1} \omega(\lambda_m x_m,\lambda_{m+1} x_{m+1}) \cr
& = \prod_{m=0}^{k-1}\lambda_m^2 \prod_{m=0}^{k-1} \omega(x_m,x_{m+1}) \cr
& = \prod_{m=0}^{k-1} \omega(x_m,x_{m+1}) \cr}
\end{equation*}
which establishes the result.
\item[(iv)] Under the symmetry, each of the given 2-integrals is multiplied by a factor 
$\prod_{m=0}^{k-1}\lambda_m$, which establishes the result.
\end{itemize}
\end{proof}

These results yield a 5-parameter family of integrable 4-dimensional rational maps.

\begin{corollary}
The polar map is completely integrable in the case $k=2$, $n=2$. 
\end{corollary}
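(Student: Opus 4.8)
The plan is to combine the conserved structures established in the three preceding propositions and reduce the dynamics to motion on invariant curves. In the case $k=2$, $n=2$ the phase space is $V^k=(\mathbb{R}^2)^2\cong\mathbb{R}^4$, so the polar map $\varphi$ is a $4$-dimensional rational map. From the integrals proposition we have the two $2$-integrals $\omega(x_0,x_1)$ and $\omega(x_1,x_2)$; from the measure proposition we have the $\varphi$-invariant measure (\ref{eq:measure}); and from the integrability proposition we have a one-parameter scaling $2$-symmetry under which both $2$-integrals and the measure are invariant (parts (ii) and (iv)). The strategy is to show that in dimension four these amount to enough invariant structure for complete integrability.

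First I would upgrade the two $2$-integrals to genuine integrals of $\varphi$. Since $\varphi$ acts as the shift $x_m\mapsto x_{m+1}$, it carries $I_1:=\omega(x_0,x_1)$ to $\omega(x_1,x_2)=I_2$ and carries $I_2$ to $\omega(x_2,x_3)$; but the identity $\omega(x_0,x_1)=\omega(x_2,x_3)$ proved in the integrals proposition shows this last quantity equals $I_1$. Hence $\varphi$ merely interchanges $I_1$ and $I_2$, so their elementary symmetric functions
\begin{equation*}
J_1 = \omega(x_0,x_1)+\omega(x_1,x_2),\qquad J_2 = \omega(x_0,x_1)\,\omega(x_1,x_2)
\end{equation*}
are genuine integrals of $\varphi$; here $J_2$ is exactly the product integral of part (iii) of the integrability proposition.

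Next I would carry out the reduction. For a map (as opposed to a flow) an invariant measure together with $n-2$ integrals is not by itself enough for integrability, so the scaling symmetry is essential. The two integrals $J_1,J_2$ confine generic orbits to the $2$-dimensional common level sets $\{J_1=c_1,\,J_2=c_2\}$, which are preserved by the one-parameter scaling group because that group fixes $J_1$ and $J_2$. Quotienting each level set by the (generically free) scaling action produces a one-dimensional reduced space on which $\varphi^2$ descends to a map, and a map of a curve carrying the induced invariant measure is trivially integrable; integrability of $\varphi^2$ then yields that of $\varphi$. Equivalently, in dimension $n=4$ one has two integrals and one continuous symmetry, $2+1=n-1$, which is the count guaranteeing that orbits lie on invariant curves.

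The step I expect to be the main obstacle is verifying the genericity hypotheses on which the reduction rests, chiefly the functional independence of $J_1$ and $J_2$ (equivalently of $I_1$ and $I_2$) and the transversality of the scaling orbits to the level sets. The difficulty is that $I_2=\omega(x_1,x_2)$ involves $x_2$, which is defined only implicitly by the linear system (\ref{eq:polarmapdegn}); to settle independence I would solve that system for $x_2$ as an explicit rational function of $(x_0,x_1)$ and compute the rank of the Jacobian $\partial(J_1,J_2)/\partial(x_0,x_1)$, or else note that independence is an open and dense condition on the five-parameter space of quartic Hamiltonians and verify it on a single representative example. Care is also needed near the locus $I_1=I_2$, where the symmetric functions degenerate, to ensure the conclusions hold on an open dense subset of $\mathbb{R}^4$.
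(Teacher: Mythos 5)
Your proposal is, in substance, the same argument as the paper's: the same three ingredients (the two $2$-integrals $\omega(x_0,x_1)$ and $\omega(x_1,x_2)$, the invariant measure, and the one-parameter scaling $2$-symmetry), the same count $2+1=3$ structures in a $4$-dimensional phase space, and the same reduction of the second iterate to a $1$-dimensional quotient. The differences are organizational. First, you symmetrize the $2$-integrals into genuine integrals $J_1=I_1+I_2$, $J_2=I_1I_2$ of $\varphi$ itself; this is correct (since $\varphi$ swaps $I_1$ and $I_2$ by the identity $\omega(x_0,x_1)=\omega(x_2,x_3)$ from Proposition 4) and is a nice touch, but it buys little, because the scaling group is only a $2$-symmetry, so you must return to $\varphi^2$ for the quotient step anyway---which is where the paper works from the outset. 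Second, you restrict to the joint level sets of the integrals first and quotient by the symmetry second; the paper does the opposite: it quotients first, invoking Huang's coordinate-free reduction so that the measure descends to the $3$-dimensional quotient, and then uses the two integrals there, invoking Bruschi et al.\ for the statement that a measure-preserving $3$-dimensional map with two integrals is integrable. The paper's ordering has the advantage that the measure descent is handled by citation, whereas in your ordering the ``induced invariant measure'' on the $1$-dimensional quotient of a level set requires disintegrating the $4$-dimensional measure over the level sets and then pushing forward to the quotient; this is true but glossed in your write-up. Also, the functional independence you single out as the main obstacle is already asserted in Proposition 4(iii), so it need not be re-derived.

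The one step the paper includes and you omit is reconstruction. Your chain ``the reduced $1$-dimensional map is integrable, hence $\varphi^2$ is integrable, hence $\varphi$ is'' jumps from integrability of the quotient dynamics to integrability of the dynamics on the level set itself; these are not the same, since the motion in the group direction (the scaling parameter $\lambda$) is a cocycle over the quotient dynamics and must be solved separately. The paper addresses this in one sentence: the reconstruction dynamics obey a $1$-dimensional, linear, constant-coefficient, nonautonomous difference equation (linear in $\log\lambda$), hence are integrable. The gap is easily filled---the group is abelian and the recursion solvable in closed form---but as written your argument asserts the implication without justification.
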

\begin{proof}
The 2nd iterate of the polar map in this case has a 1-dimensional measure-preserving symmetry group. The map thus descends to a measure-preserving map on the 3-dimensional quotient \cite{huang}. The two integrals of the 2nd iterate of the polar map are invariant under the symmetry and hence also pass to the quotient. This yields a 3-dimensional measure-preserving map with 2 integrals, thus integrable \cite{bruschi}. The reconstruction dynamics obey a 1-dimensional, linear, constant-coefficient, nonautonomous difference equation and hence are integrable. From the integration of the 2nd iterate, the integration of the polar map itself is immediate.
\end{proof}

\begin{example}\rm
To be fully explicit we give here the integrable rational map obtained in the case $k=2$, $n=2$. Let
$(q,p)$ be coordinates on $V=\R^2$ and let the Poisson tensor and Hamiltonian be
\begin{equation*}
K=\left({\matrix{0 & 1 \cr -1 & 0}}\right),\quad H = a q^4 + 4 b q^3 p + 6 c q^2 p^2 + 4 d q p^3 + e p^4.
\end{equation*}
Then the polar map on $V^2$ is $(q_0,p_0,q_1,p_1)\mapsto (q_1,p_1,q_2,p_2)$, with
\begin{equation}
\label{eq:explicit}
\eqalign{q_2 &= \frac{q_0 + 2 h \left(b q_0^2 q_1 + c ( 2 p_0 q_0 q_1 + p_1 q_0^2) + d ( 2 p_0 p_1 q_0 + p_0^2 q_1) + e p_0^2 p_1\right)}{1 - 4 h^2 \Delta} \cr
p_2 &= \frac{p_0 - 2 h \left(a q_0^2 q_1 + b(2 p_0 q_0 q_1 + p_1 q_0^2) + c(2 p_0 p_1 q_0 + p_0^2 q_1) + d p_0^2 p_1\right)}{1 - 4 h^2 \Delta},
}
\end{equation}
where
\def\mat#1#2#3#4{{\left| \matrix{ #1 & #2 \cr #3 & #4} \right| }}
\begin{equation*}
\eqalign{
\Delta  = 
\mat{c}{d}{d}{e}p_0^2 p_1^2
& + \mat{b}{c}{d}{e} (p_0^2 p_1 q_1 + p_0 p_1^2 q_0)
+ \mat{b}{c}{c}{d}(p_0^2 q_1^2+p_1^2 q_0^2) \cr
& +\mat{a}{b}{c}{d}(p_1 q_0^2 q_1 + p_0 q_0 q_1^2)
+\mat{a}{c}{c}{e}p_0 p_1 q_0 q_1
+\mat{a}{b}{b}{c}q_0^2 q_1^2.}
\end{equation*}
The map is birational of degree 3 over degree 4.
The two 2-integrals are 
\begin{equation*}
q_0 p_1 - q_1 p_0\hbox{\rm \ and\ } q_1 p_2 - p_1 q_2,
\end{equation*}
where $q_2$ and $p_2$ are given in \eqref{eq:explicit}.
The invariant measure is
\begin{equation*}
\frac{dq_0 \wedge dp_0 \wedge dq_1 \wedge dp_1}{1 - 4 h^2 \Delta}.
\end{equation*}
 \end{example}

If the degree $k>2$ or the dimension $n>2$ then the geometric properties described above are not enough to ensure integrability. Indeed, we find that the polar map associated with a homogeneous planar quintic Hamiltonian (i.e. $k=3$, $n=2$) does not pass the entropy test for complete integrability \cite{bellon1999algebraic,veselov1992growth}.

\section*{Acknowledgements}
This research was supported by a  \href{http://wiki.math.ntnu.no/crisp}{Marie Curie International Research Staff Exchange Scheme Fellowship} within the \href{http://cordis.europa.eu/fp7/home_en.html}{7th European Community Framework Programme}, and by the Marsden Fund of the Royal Society of New Zealand and the Australian Research Council. 
\section*{References}

\end{document}